\newcommand{\GG}{\ensuremath{\mathcal G}\xspace}
\newcommand{\VV}{\ensuremath{\mathcal V}\xspace}
\newcommand{\EE}{\ensuremath{\mathcal E}\xspace}
\begin{document}
\title{A Tight Lower Bound on Cubic Vertices and Upper Bounds on Thin and Non-thin edges in Planar Braces}
\titlerunning{On Cubic Vertices and Thin Edges of Planar Braces}
%
\author{Koustav De\inst{1}\orcidID{0000-0002-4434-0627}}
\authorrunning{K. De}
%
\institute{Department of Computer Science and Engineering, IIT Kharagpur, Kharagpur, West Midnapore, West Bengal - 721302, India \\
\email{koustavde7@kgpian.iitkgp.ac.in}}

\maketitle              
\begin{abstract}
For a subset $X$ of the vertex set $\VV(\GG)$ of a graph $\GG$, we denote the set of edges of $\GG$ which have exactly one end in $X$ by $\partial(X)$ and refer to it as the cut of $X$ or edge cut $\partial(X)$. A graph $\GG=(\VV,\EE)$ is called matching covered if $\forall e \in \EE(\GG), ~\exists \text{a perfect matching }M \text{ of }\GG \text{ s. t. } e \in M$. A cut $C$ of a matching covered graph $\GG$ is a separating cut if and only if, given any edge $e$, there is a perfect matching $M_{e}$ of $\GG$ such that $e \in M_{e}$ and $|C \cap M_{e}| = 1$. A cut $C$ in a matching covered graph $\GG$ is a tight cut of $\GG$ if $|C \cap M| = 1$ for every perfect matching $M$ of $\GG$. For, $X, Y \subseteq \VV(\GG)$, we denote the set of edges of $\EE(\GG)$ which have one endpoint in $X$ and the other endpoint in $Y$ by $E[X,Y]$. Let $\partial(X)=E[X,\overline{X}]$ be an edge cut, where $\overline{X}=\VV(\GG) \setminus X$. An edge cut is trivial if $|X|=1$ or $|\overline{X}|=1$. A matching covered graph, which is free of nontrivial tight cuts, is a brace if it is bipartite and is a brick if it is non-bipartite. An edge $e$ in a brace $\GG$ is \emph{thin} if, for every tight cut $\partial(X)$ of $\GG - e$, $|X| \le 3$ or $|\overline{X}| \le 3$.

Carvalho, Lucchesi and Murty conjectured that there exists a positive constant $c$ such that every brace $\GG$ has $c|\VV(\GG)|$ thin edges \cite{DBLP:journals/combinatorics/LucchesiCM15}. He and Lu \cite{HE2025153} showed a lower bound of thin edges in a brace in terms of the number of cubic vertices. We asked whether any planar brace exists that does not contain any cubic vertices. We answer negatively by showing that such set of planar braces is empty. We have been able to show a quantitively tight lower bound on the number of cubic vertices in a planar brace. We have proved tight upper bounds of nonthin edges and thin edges in a planar brace.

\keywords{Perfect matchings  \and Tight cut \and Braces \and Thin edges.}
\end{abstract}
\section{Introduction}
We consider only simple graphs in this paper. For undefined notations and terminologies, we refer to \cite{Bondy2008GraphT}. Let $\GG$ be a graph with vertex set $\VV(\GG)$ and edge set $\EE(\GG)$. For two disjoint sets $A, B \subseteq \VV(\GG)$, we denote by $\GG[A,B]$ the bipartite graph with two colour classes $A$ and $B$. For a subset $X \subseteq \VV(\GG)$, let $\GG[X]$ denote the subgraph induced by $X$, and define the neighbourhood of $X$ as $N(X) = \{\, y \in \VV(\GG)\setminus X : \exists\, x \in X \text{ with } xy \in \EE(\GG) \,\}$. For a single vertex $x \in \VV(\GG)$, we abbreviate $N(x)$ for $N(\{x\})$. A vertex of degree $3$ is called a \emph{cubic vertex}. For subsets $X, Y \subseteq \VV(\GG)$, we write $\EE[X,Y]$ for the set of edges of $\GG$ with one endpoint in $X$ and the other in $Y$. For a subset $X$ of the vertex set $\VV(\GG)$ of a graph $\GG$, we denote the set of edges of $\GG$ which have exactly one end in $X$ by $\partial(X)$ and refer to it as the cut of $X$ or edge cut $\partial(X)$. For a vertex $v$ of $\GG$, we simplify the notation $\partial(\{ v \})$ to $\partial(v)$.

\begin{definition}[Matching]
    A matching is a set of pairwise non-adjacent edges. Formally, a matching of $\GG = (\VV,\EE)$ is a set $M \subseteq \EE(\GG)$ such that $|M \cap \partial(v)| \in \{ 0, 1 \} ~\forall v \in \VV(\GG)$.
\end{definition}

\begin{definition}[Perfect Matching]
    A matching $M$ of $\GG = (\VV,\EE)$ is called perfect if it covers all vertices of $\GG$ exactly once.
\end{definition}

\begin{definition}[Matching Covered Graph]
    A graph $\GG=(\VV,\EE)$ is called matching covered if $\forall e \in \EE(\GG), ~\exists \text{a perfect matching }M \text{ of }\GG \text{ s. t. } e \in M$. 
\end{definition}

\begin{definition}[Brace]\cite{GORSKY2023113249}
    \label{def:brace}
    A graph, other than the path of length three is a brace if it is bipartite and any two disjoint edges are part of a perfect matching.
\end{definition}

\begin{definition}[Cuts]\cite{10.1137/17M1138704}
    For a subset $X$ of the vertex set $\VV(\GG)$ of a graph $\GG$, we denote the set of edges of $\GG$ which have exactly one end in $X$ by $\partial(X)$ and refer to it as the cut of $X$.
\end{definition}
If $\GG$ is connected and $C\coloneqq \partial(X) = \partial(Y)$, then $Y=X$ or $Y=\overline{X}=\VV \setminus X$, and we refer to $X$ and $\overline{X}$ as the shores of $C$.
\par For a cut $C \coloneqq \partial(X)$ of a matching covered graph $\GG$, the parities of the cardinalities of the two shores are the same otherwise the order of the graph is not an even number. Here we shall only be concerned with those cuts that have shores of odd cardinality. A cut is trivial if either $|X| = 1$ or $|\overline{X}| = 1$ and is nontrivial otherwise.
\par Given any cut $C\coloneqq \partial(X)$ of a graph $\GG$, one can obtain a graph by shrinking $X$ to a single vertex $x$ (and deleting any resulting loops); we denote it by $\GG/\left( X \rightarrow x \right)$ and refer to the vertex $x$ as its contraction vertex. The two graphs $\GG/\left( X \rightarrow x \right)$ and $\GG/\left( \overline{X} \rightarrow \overline{x} \right)$ are the two $C$-contractions of $\GG$. When the names of the contraction vertices are irrelevant we shall denote the two $C$-contractions of $\GG$ simply by $\GG/X$ and $\GG/\overline{X}$.

\begin{definition}[Separating Cuts]\cite{10.1137/17M1138704}
    A cut $C\coloneqq \partial(X)$ of a matching covered graph $\GG$ is separating if both the $C$-contractions of $\GG$ are also matching covered. All trivial cuts are clearly separating cuts.
\end{definition}
The following proposition provides a necessary and sufficient condition under which a cut in a matching covered graph is a separating cut and can be easily proved.

\begin{proposition}
    \label{prop:sep_cut}
    A cut $C$ of a matching covered graph $\GG$ is a separating cut if and only if, given any edge $e$, there is a perfect matching $M_{e}$ of $\GG$ such that $e \in M_{e}$ and $|C \cap M_{e}| = 1$.
\end{proposition}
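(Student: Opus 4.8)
The plan is to reduce everything to a single structural correspondence: perfect matchings of $\GG$ that meet $C$ in exactly one edge are, after restriction, essentially the same data as perfect matchings of the two $C$-contractions $\GG/X$ and $\GG/\overline{X}$. First I would record the elementary fact that the contraction vertex of $\GG/X$ (resp.\ $\GG/\overline{X}$) must be covered by exactly one matching edge, and that this edge is the image of an edge of $C$; hence \emph{every} perfect matching of a $C$-contraction uses precisely one edge of $C$. Conversely, a perfect matching $M$ of $\GG$ with $|C \cap M| = 1$ splits canonically as $M = P_1 \cup \{f\} \cup P_2$, where $f$ is the unique edge of $C$ in $M$, the set $P_1 \subseteq \EE(\GG[X])$ covers $X$ minus the end of $f$ in $X$, and $P_2 \subseteq \EE(\GG[\overline{X}])$ covers $\overline{X}$ minus the end of $f$ in $\overline{X}$. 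Projecting $M$ into $\GG/X$ then collapses $P_1$ to loops, keeps $P_2$, and sends $f$ to the edge at the contraction vertex, yielding a perfect matching of $\GG/X$; symmetrically for $\GG/\overline{X}$.

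For the $(\Leftarrow)$ direction I would assume that through every edge $e$ there is a perfect matching $M_e$ with $|C \cap M_e| = 1$, and show each $C$-contraction is matching covered. Fixing $\GG/X$, its edges are either images of edges inside $\overline{X}$ or edges at the contraction vertex (images of $C$-edges). Given such an edge, I pull it back to an actual edge $e$ of $\GG$, invoke the guaranteed $M_e$, and project $M_e$ to $\GG/X$ by the correspondence above; the resulting perfect matching of $\GG/X$ contains the prescribed edge. This handles both edge types uniformly, so $\GG/X$, and by the symmetric argument $\GG/\overline{X}$, is matching covered, i.e.\ $C$ is separating.

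For the $(\Rightarrow)$ direction I would assume both contractions are matching covered and build $M_e$ for an arbitrary edge $e$, splitting on whether $e \in C$. If $e \in C$, matching-coveredness of $\GG/\overline{X}$ supplies a perfect matching through the image of $e$, whose inside-$X$ part I call $P_1$, and matching-coveredness of $\GG/X$ supplies one through the image of $e$ with inside-$\overline{X}$ part $P_2$; since each of these matchings covers its contraction vertex exactly by $e$, the union $P_1 \cup \{e\} \cup P_2$ is a perfect matching of $\GG$ meeting $C$ only in $e$. If $e \notin C$, say $e \subseteq X$, I first take a perfect matching of $\GG/\overline{X}$ through $e$; it covers the contraction vertex via the image of some $f \in C$, and I then use matching-coveredness of $\GG/X$ to find a perfect matching through the image of that \emph{same} $f$, gluing the two halves along $f$ exactly as before. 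The delicate point throughout is precisely this gluing: one must check that the two half-matchings drawn from the two contractions are supported on the correct complementary vertex sets and share exactly the single cut edge, so that their union covers every vertex once and meets $C$ once. This bookkeeping---ensuring the cut edges chosen on the two sides coincide and that no vertex is doubly covered---is the only real obstacle; the parity remark that every perfect matching meets an odd-shored cut in an odd number of edges explains why $1$ is the correct target, but is not itself needed in the argument.
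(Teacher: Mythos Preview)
Your argument is correct and is precisely the standard bijection-and-glue proof one expects here. Note, however, that the paper does not actually supply a proof of this proposition: it merely states that the result ``can be easily proved'' and moves on. What you have written is exactly the routine verification the authors are gesturing at---set up the correspondence between perfect matchings of $\GG$ meeting $C$ once and pairs of perfect matchings of the two $C$-contractions, then run both implications through that correspondence---so there is no meaningful divergence to report. Your handling of the case split ($e \in C$ versus $e \subseteq X$) and the care you take to align the cut edge $f$ across the two halves are the only points requiring attention, and you address them adequately.
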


\begin{definition}[Tight cuts, bricks and braces]\cite{10.1137/17M1138704}
    A cut $C \coloneqq \partial(X)$ in a matching covered graph $\GG$ is a tight cut of $\GG$ if $|C \cap M| = 1$ for every perfect matching $M$ of $\GG$.
\end{definition}
It follows from \Cref{prop:sep_cut} that every tight cut of $\GG$ is also a separating cut of $\GG$. However, the converse does not always hold. For example, the cut shown in \Cref{fig:sep_but_not_tight} is a separating cut, but it is not a tight cut.
\begin{figure}[!htpb]
    \centering
    \includegraphics[scale=0.7]{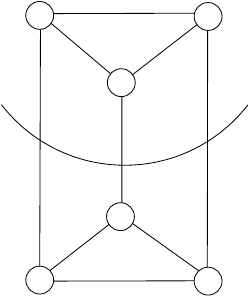}
    \caption{A cut which is separating but not tight}
    \label{fig:sep_but_not_tight}
\end{figure}
\par A matching covered graph, which is free of nontrivial tight cuts (i.e., for every tight cut $\partial(X)$, either $|X| = 1$ or $|\overline{X}|=1$), is a brace if it is bipartite and is a brick if it is non-bipartite. 

In particular, a cycle with four vertices $C_4$ is the minimum brace. An edge $e$ in a brace $\GG$ with at least six vertices is thin if, for every tight cut $\partial(X)$ of $\GG - e$, either $|X| \le 3$ or $|\overline{X}| \le 3$. Obviously, in a brace with six vertices, every edge is thin.

A \emph{brace} with at least six vertices is a $2$-extendable bipartite graph, i.e., every pair of non-adjacent edges can be extended to a perfect matching~\cite{LOVASZ1987187}. Braces are fundamental objects since they serve as the building blocks in decompositions of matching covered graphs.  
Lov\'{a}sz \cite{LOVASZ1987187} proved that any two applications of this decomposition process result in the same collection of \emph{bricks} (i.e., matching covered non-bipartite graphs without non-trivial tight cuts) and \emph{braces} (up to multiple edges). Many important problems in matching theory can be reduced to braces and bricks. Furthermore, if $\GG$ is a matching covered bipartite graph, then every tight cut decomposition of $\GG$ consists solely of braces \cite{DBLP:journals/combinatorics/LucchesiCM15}.

McCuaig \cite{McCuaig2001124} proved that every brace contains a \emph{thin edge}. Using thin edges, all braces can be generated from three fundamental classes of braces through several operations.  
Later, Carvalho, Lucchesi, and Murty \cite{DBLP:journals/combinatorics/LucchesiCM15} provided a simpler proof of the fact that every brace contains a thin edge, and further established that each brace contains at least two thin edges. In addition, they proposed the following conjecture.

\begin{conjecture} \cite{DBLP:journals/combinatorics/LucchesiCM15}
\label{conj:thn-edg-brc}
    There exists a positive constant $c$ such that every brace $\GG$ has $c|\VV(\GG)|$ thin edges.
\end{conjecture}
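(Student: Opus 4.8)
This is the Carvalho--Lucchesi--Murty conjecture, and the genuinely hard instances are the sparse ones: for dense braces such as $K_{n,n}$, deleting a single edge leaves a graph with no nontrivial balanced tight cut, so essentially every edge is thin and the bound $c|\VV(\GG)|$ holds with enormous slack. The plan is therefore to attack the sparsest natural subfamily first, the planar braces (where the edge count is pinned down topologically), and then to isolate exactly what extra input the general case requires. Throughout I would fix a bipartition $(A,B)$ and lean on two facts. First, every brace with at least six vertices satisfies the expansion inequality $|N(S)| \ge |S| + 2$ for all $S \subseteq A$ with $1 \le |S| \le |A| - 2$; taking $S = \{a\}$ gives minimum degree at least $3$, hence $|\EE(\GG)| \ge \tfrac{3}{2}|\VV(\GG)|$. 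Second, a simple bipartite planar graph has girth at least $4$, so $|\EE(\GG)| \le 2|\VV(\GG)| - 4$.

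Combining these with $\sum_v \deg(v) = 2|\EE(\GG)|$ and the observation that every non-cubic vertex has degree at least $4$ yields $2|\EE(\GG)| \ge 4|\VV(\GG)| - (\text{\#cubic vertices})$ and $2|\EE(\GG)| \le 4|\VV(\GG)| - 8$, so every planar brace has at least $8$ cubic vertices, a bound attained by the cube $Q_3$ and by quadrangulation-type braces. This already settles the preliminary question negatively: no planar brace is free of cubic vertices. It is, however, only a constant, so the cubic-vertex route alone (through He and Lu's lower bound on thin edges in terms of cubic vertices) will not by itself deliver a linear count; I would keep that bound in reserve only as a consistency check on the final constant.

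The crux is to turn ``most edges are thin'' into a theorem by counting nonthin edges and subtracting, using $\text{thin} = |\EE(\GG)| - (\text{nonthin edges})$. An edge $e = uv$ is nonthin precisely when $\GG - e$ admits a tight cut $\partial(X)$ with $|X|, |\overline{X}| \ge 5$. Since $\GG$ is a brace it has no nontrivial tight cut, so for nontrivial $X$ the cut $\partial(X)$ is not tight in $\GG$; its becoming tight in $\GG - e$ is forced by the removal of $e$, which pins down how $e$ sits relative to the shores. I would classify these emergent balanced tight cuts by their shores and argue, via the expansion inequality applied inside each $C$-contraction, that each such cut occupies a bounded region of a fixed planar embedding, so that a given edge can be charged to only boundedly many of them and vice versa. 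Euler's formula then caps the total number of nonthin edges by a sublinear (at worst small-linear) quantity; subtracting from $|\EE(\GG)| \ge \tfrac{3}{2}|\VV(\GG)|$ leaves $c|\VV(\GG)|$ thin edges with an explicit constant $c$, which I would reconcile with the He--Lu estimate to sharpen $c$.

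The main obstacle is exactly the step that planarity renders tractable: bounding, simultaneously over all edges $e$, the number of balanced tight cuts created in $\GG - e$. In the planar setting the embedding and the $2|\VV(\GG)| - 4$ ceiling give a clean budget to charge against; for general sparse braces there is no embedding, a single deletion can in principle spawn many overlapping balanced tight cuts, and one must instead control their interaction through the laminarity of tight-cut families and the tight-cut decomposition without any topological accounting. I expect the planar argument to go through cleanly, and the honest statement of what remains open is that the full conjecture needs a purely combinatorial substitute for the planar edge bound that still forces nonthin edges to be scarce in the sparse regime.
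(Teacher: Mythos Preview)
The statement you are addressing is presented in the paper as an open \emph{conjecture} (cited from Carvalho, Lucchesi and Murty), not as a theorem; the paper offers no proof of it, and indeed does not claim to resolve it even in the planar case. So there is no ``paper's own proof'' to compare against. What the paper does prove are the auxiliary structural facts you rederive along the way: your degree-sum argument giving $n_3 \ge 8$ for planar braces is exactly the content of the paper's \Cref{thm:no-cubic-vrtx-plnr-brace}, and your remark that the He--Lu cubic-vertex bound alone cannot deliver a linear thin-edge count is consistent with the paper's \Cref{thm:thin_edge_analysis}, which records precisely that limitation.

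The genuine gap in your proposal is the charging step. You assert that emergent balanced tight cuts in $\GG - e$ ``occupy a bounded region of a fixed planar embedding'' and that Euler's formula then caps the nonthin edges by a sublinear or small-linear quantity, but none of this is carried out: you have not defined the regions, specified the charging rule, or shown why distinct nonthin edges cannot all charge to the same bounded face structure. The paper's only control on nonthin edges (inherited from He--Lu) is that those with \emph{both} endpoints of degree $\ge 4$ span a forest, hence number at most $n-9$ in the planar case; this says nothing about nonthin edges incident to cubic vertices, and for cubic planar braces such as $Q_3$ every edge is of that type. Your sketch does not close this hole, and you should not present the planar case as ``going through cleanly'' when the core estimate is still missing. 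The honest status is that the conjecture is open in general and your proposal does not settle it for planar braces either.
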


Let $\EE_{T}(\GG)$ denote the set of thin edges in a brace $\GG$. The authors He and Lu \cite{HE2025153} presented the following structural characterization of the nonthin edges in the subgraph induced by vertices with degree at least 4.

\begin{theorem} \cite{HE2025153}
\label{thm:frst-nthin-deg4}
Let $\GG$ be a brace and $S_{1}=\{ x \in \VV(\GG) \mid d(x) \ge 4 \}$. Then the graph spanned by $\EE\left( \GG[S_{1}] \right) \setminus \EE_{T}(\GG)$ is a forest.
\end{theorem}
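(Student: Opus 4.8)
The plan is to recast the thin/nonthin dichotomy as a statement about extremal Hall sets and then show that a cycle of nonthin edges among high-degree vertices would manufacture a set violating the brace's surplus bound. Fix a bipartition $(A,B)$ of $\GG$ and record the neighbourhood form of two facts. (i) For a bipartite matching covered graph, a cut $\partial(X)$ with $X=A_1\cup B_1$ and $|A_1|=|B_1|+1$ is tight iff $N(B_1)\subseteq A_1$, equivalently $|N(B_1)|=|B_1|+1$; indeed, by matching coverability tightness forbids any edge between $B_1$ and $A\setminus A_1$. (ii) Since $\GG$ is a brace, it satisfies the surplus-two condition $|N(S)|\ge|S|+2$ for every $S\subseteq B$ with $1\le|S|\le|B|-2$, the bipartite characterisation of $2$-extendability. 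Combining (i) and (ii): an edge $e=uv$ with $u\in A$, $v\in B$ is nonthin (i.e. $\GG-e$ has a tight cut with both shores of size $\ge 5$) iff there is a \emph{witness set} $S\subseteq B$ with $v\in S$, $2\le|S|\le|B|-3$, $|N_{\GG}(S)|=|S|+2$ (so $S$ is extremal for the brace bound), and $N_{\GG}(u)\cap S=\{v\}$ (so $u$ is attached to $S$ only through $e$); the symmetric statement yields an $A$-side witness. In particular the two ends of a nonthin edge always lie on opposite shores of its witnessing cut, since putting them on the same shore keeps $N_{\GG}(B_1)\subseteq A_1$ and hence leaves the cut tight in $\GG$ itself, contradicting that $\GG$ is a brace.

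Next I would set up the uncrossing. Call $S\subseteq B$ \emph{tight} if $|N_{\GG}(S)|=|S|+2$ within the admissible range. Submodularity of the neighbourhood function, $|N(P\cup Q)|+|N(P\cap Q)|\le|N(P)|+|N(Q)|$, together with the brace bound applied to $P\cup Q$ and $P\cap Q$, forces, for any two intersecting tight sets whose union is still in range, that both $S\cap S'$ and $S\cup S'$ are tight and the inequality is an equality. Thus the tight sets form a sublattice. Moreover a one-in/one-out lemma holds: if $e=uv$ has witness $S$, then $N_{\GG}(S\setminus\{v\})=N_{\GG}(S)\setminus\{u\}$ and $S\setminus\{v\}$ is again extremal, because $u$ was attached only through $v$ and the brace bound forbids losing two neighbours at once.

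Finally, suppose for contradiction that $\EE(\GG[S_1])\setminus\EE_T(\GG)$ contains a cycle, and take a shortest one, $a_1b_1a_2\cdots a_kb_k a_1$, with all vertices of degree at least $4$. To each edge I attach a minimal witness set and aim to splice them into a single proper in-range set $S^\ast$ with $|N_{\GG}(S^\ast)|\le|S^\ast|+1$, contradicting the surplus-two bound. The engine is the lattice closure together with the one-in/one-out lemma: where two consecutive witnesses cross, uncrossing replaces them by their intersection and union while preserving extremality and, guided by the unique-attachment data, removes one surplus unit; iterating around the closed cycle is what drives the total surplus below two, and minimality of the cycle forbids chords that would shortcut the traversal.

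The principal obstacle is that the naive witnesses need not cross — consecutive $B$-side witnesses can even be disjoint, since each attachment condition forces its witness to exclude the neighbouring cycle vertices — so the uncrossing engine does not start for free. Overcoming this requires choosing the witnesses coherently (alternating $A$-type and $B$-type around the cycle, or replacing each by a canonical minimal extremal set through its edge) and then proving that, for a shortest cycle of degree-$\ge 4$ vertices, consecutive canonical witnesses must interleave. This is exactly where the hypothesis $d(x)\ge 4$ is essential: it rules out the cubic configurations in which $\{b\}$ is already extremal ($|N(b)|=3$) and the witnesses degenerate, and it keeps every intermediate union inside the range $2\le|\cdot|\le|B|-3$, so that the surplus-one set produced is a genuine nontrivial counterexample rather than a boundary cut. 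Verifying that the peelings close up consistently around the \emph{whole} cycle — rather than across a single shared vertex, which would only certify a tree and is consistent with the claim — is the combinatorial heart of the argument; everything else reduces to submodularity and Hall-type counting.
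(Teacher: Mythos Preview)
Your recasting of nonthin edges via extremal witness sets is correct and matches \Cref{propo:nonthin-scut}, and the one-in/one-out lemma is valid for the reason you state. The gap is the splicing step. You assert that uncrossing together with one-in/one-out, iterated around the cycle, will produce a set $S^\ast$ with $|N(S^\ast)|\le|S^\ast|+1$. But both of your operations \emph{preserve} extremality: when two surplus-$2$ sets $P,Q$ cross with $P\cap Q$ and $P\cup Q$ in range, submodularity together with the brace bound forces equality throughout, so $P\cap Q$ and $P\cup Q$ are again surplus-$2$; and peeling one vertex via one-in/one-out yields another surplus-$2$ set. Nothing in your engine ever drops the surplus below $2$, so the advertised contradiction never materialises. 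You acknowledge this when you call the closing-up ``the combinatorial heart'' and leave it undone, but it is not a detail to be filled in later --- it is the entire argument, and as stated the target (a single surplus-$\le 1$ set) is the wrong one.

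The paper does not itself prove this theorem but summarises the He--Lu argument, whose mechanism is not a surplus violation but an infinite descent. First a crossing lemma: for $u\in S_1$ incident to two nonthin edges with $S$-cuts $\partial(X),\partial(Y)$, one has $|\overline{X}\cap\overline{Y}|\le 1$; this is where $d(u)\ge 4$ is actually used, via \Cref{cor:x-intsct-b-sz} and \Cref{propo:nonthin-scut}, rather than merely to keep singletons out of range. Then, walking a hypothetical cycle of nonthin edges, one defines from the $S$-cuts a sequence of ``end sets'' $W_1,W_2,\ldots,W_k,W_1^1,\ldots$ whose cardinalities are strictly decreasing, and the cyclic structure forces the sequence to continue indefinitely, contradicting well-ordering. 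Your ingredients (witness sets, one-in/one-out) are compatible with such a descent, but you would need to replace the surplus-drop claim with a monotone invariant along the cycle and prove the crossing lemma that makes consecutive $S$-cuts comparable.
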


Using \Cref{thm:frst-nthin-deg4}, He and Lu \cite{HE2025153} was able to show a lower bound of thin edges in a brace in terms of the number of cubic vertices which is stated in the following theorem.

\begin{theorem} \cite{HE2025153}
\label{thm:lwr-bnd-thn-edg-brc}
Let $\GG$ be a brace and let $n_3$ be the number of cubic vertices in $\GG$. If $n_3 = k|\VV(\GG)|$ and $k < 0.4$, then $\GG$ has at least $\frac{2-5k}{2}|\VV(\GG)| + 1$ thin edges.
\end{theorem}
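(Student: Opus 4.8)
The plan is to prove the bound by an edge-counting argument that plays off the structural input of \Cref{thm:frst-nthin-deg4}: I would combine a lower bound on the total number of edges $|\EE(\GG)|$ with an upper bound on the number of \emph{non-thin} edges, so that the difference furnishes the desired lower bound on $|\EE_{T}(\GG)|$. Write $n = |\VV(\GG)|$, and recall that a brace with at least six vertices is $2$-extendable, hence $3$-connected, so every vertex has degree at least $3$. Consequently the vertex set partitions into the $n_3 = kn$ cubic vertices and the set $S_1 = \{x : d(x) \ge 4\}$, whose size is therefore exactly $|S_1| = (1-k)n$. First I would bound the total number of edges from below by the handshake identity: since each cubic vertex contributes degree exactly $3$ and each vertex of $S_1$ contributes degree at least $4$,
\begin{equation*}
2|\EE(\GG)| = \sum_{v \in \VV(\GG)} d(v) \ge 3n_3 + 4(n - n_3) = (4-k)n,
\end{equation*}
so that $|\EE(\GG)| \ge \tfrac{(4-k)n}{2}$.

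Next I would bound the number of non-thin edges from above by splitting them according to whether they touch a cubic vertex. Every non-thin edge with at least one cubic endpoint is counted among the edges incident to cubic vertices, of which there are at most $3n_3 = 3kn$ (the sum of degrees of the cubic vertices, which merely overcounts the edges having both ends cubic). The remaining non-thin edges lie entirely inside $\GG[S_1]$, i.e.\ they belong to $\EE(\GG[S_1]) \setminus \EE_{T}(\GG)$; by \Cref{thm:frst-nthin-deg4} these span a forest on at most $|S_1|$ vertices, and a forest on at most $|S_1|$ vertices has at most $|S_1| - 1 = (1-k)n - 1$ edges. Hence the total number of non-thin edges is at most $(1-k)n - 1 + 3kn = (1+2k)n - 1$. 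Subtracting from the edge lower bound gives
\begin{equation*}
|\EE_{T}(\GG)| \ge |\EE(\GG)| - \bigl[(1+2k)n - 1\bigr] \ge \frac{(4-k)n}{2} - (1+2k)n + 1 = \frac{2-5k}{2}\,n + 1,
\end{equation*}
which is exactly the claimed bound; the hypothesis $k < 0.4$ is precisely what makes $2 - 5k > 0$, so that the bound is positive and hence nontrivial.

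I expect the only genuinely nonelementary ingredient to be the forest property of \Cref{thm:frst-nthin-deg4}: it is what keeps the contribution of non-thin edges inside $\GG[S_1]$ linear — in fact strictly below $n$ — rather than potentially quadratic, and every other step is routine bookkeeping. The one point I would be careful about is avoiding any double counting between the ``cubic-incident'' non-thin edges and the forest edges; this is automatic, since the forest edges have both endpoints in $S_1$ and are therefore disjoint from the edges incident to cubic vertices. I would also verify the minimum-degree claim explicitly (via $2$-extendability $\Rightarrow$ $3$-connectivity), as the exact partition $n = n_3 + |S_1|$ with $|S_1| = (1-k)n$ is what makes the final arithmetic close off to give the stated constant rather than merely an inequality of the right order.
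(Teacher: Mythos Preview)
Your proposal is correct and matches the approach the paper attributes to He and Lu: the paper does not prove \Cref{thm:lwr-bnd-thn-edg-brc} itself but explicitly describes the original proof as ``a counting argument that uses the result of \Cref{thm:frst-nthin-deg4} (the number of nonthin edges in $G$ is at most $|S_1|-1$) and combines it with a general degree sum calculation,'' which is precisely the handshake lower bound on $|\EE(\GG)|$ plus the forest bound on non-thin edges inside $S_1$ plus the trivial $3n_3$ bound on edges touching cubic vertices that you carry out.
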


\subsection{Properties of braces}
In this section, we discuss several properties of braces. The following characterization of braces will play a central role throughout this paper.

\begin{theorem} \cite{lovász1986matching}
\label{thm:brc-chrc-deg}
Let $\GG[ A, B]$ be a matching covered bipartite graph with at least six vertices. Then $\GG$ is a brace if and only if $|N(X)| \ge |X| + 2$, for every $X \subseteq A$ satisfying $1 \le |X| \le |A| - 2$.
\end{theorem}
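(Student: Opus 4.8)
The plan is to prove the two implications separately, exploiting the fact that for a bipartite graph on at least six vertices being a \emph{brace} is the same as being $2$-extendable (every pair of non-adjacent edges lies in a common perfect matching), as recorded after \Cref{def:brace}. Throughout, write $n = |A| = |B|$; equality holds because a matching covered graph has a perfect matching, hence a balanced bipartition, and $n \ge 3$. I would also use one preliminary observation that isolates the real content: in a connected matching covered bipartite graph one already has the \emph{strict} Hall bound $|N(X)| \ge |X| + 1$ for every nonempty proper $X \subseteq A$ (and symmetrically for $B$). Indeed, if $|N(X)| = |X|$ for some such $X$, then every perfect matching must match $X$ bijectively onto $N(X)$, so no edge joining $N(X)$ to $A \setminus X$ can lie in any perfect matching; matching coveredness then forces all such edges to be absent, making $X \cup N(X)$ a proper union of components and contradicting connectivity. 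Consequently the brace condition $|N(X)| \ge |X| + 2$ can only fail, for $1 \le |X| \le n-2$, in the borderline case $|N(X)| = |X| + 1$, which is the single case I must rule out.

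For the (easier) sufficiency direction I would argue directly with Hall's theorem. Given two non-adjacent edges $a_1 b_1$ and $a_2 b_2$, I delete their four endpoints and seek a perfect matching of $\GG - \{a_1, a_2, b_1, b_2\}$ between $A \setminus \{a_1, a_2\}$ and $B \setminus \{b_1, b_2\}$. For any nonempty $W \subseteq A \setminus \{a_1, a_2\}$ one has $1 \le |W| \le n - 2$, so the hypothesis gives $|N(W)| \ge |W| + 2$; deleting $b_1, b_2$ destroys at most two neighbours, leaving $|N(W) \setminus \{b_1, b_2\}| \ge |W|$. Thus Hall's condition holds on the reduced graph, the required matching exists, and adjoining $a_1 b_1, a_2 b_2$ extends it to a perfect matching of $\GG$; hence $\GG$ is $2$-extendable. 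The ``$+2$'' slack is exactly what is needed to absorb the two removed vertices on the $B$-side.

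For the necessity direction I would argue by contraposition. Assume some $X$ with $1 \le |X| \le n-2$ violates the bound; by the preliminary observation we may assume $|N(X)| = |X| + 1$, and set $Y = N(X)$, $t = |X|$. The strategy is to find two \emph{independent} edges $a_1 y_1$ and $a_2 y_2$ with distinct $y_1, y_2 \in Y$ and distinct $a_1, a_2 \in A \setminus X$. Any perfect matching containing both would match $y_1, y_2$ outside $X$, forcing $X$ into $Y \setminus \{y_1, y_2\}$ of size $t - 1 < t$, which is impossible; this exhibits two non-adjacent edges extending to no perfect matching, contradicting $2$-extendability.

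The heart of the argument — and the step I expect to be the main obstacle — is proving that such two independent edges exist, equivalently that the bipartite graph between $Y$ and $A \setminus X$ contains a matching of size $2$. I would establish this by a defect/König argument: if its maximum matching had size at most $1$, then either there are no edges between $Y$ and $A \setminus X$ (whence $X \cup Y$ is again a proper component, contradicting connectivity), or all such edges meet a single vertex. If they all meet one $a^\ast \in A \setminus X$, then $X' = (A \setminus X) \setminus \{a^\ast\}$ satisfies $N(X') \subseteq B \setminus Y$ with $|B \setminus Y| = n - t - 1 = |X'|$, violating the strict Hall bound on the $A$-side; if they all meet one $y^\ast \in Y$, then $Y'' = Y \setminus \{y^\ast\}$ satisfies $N(Y'') \subseteq X$ with $|X| = t = |Y''|$, violating the strict Hall bound on the $B$-side. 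Each alternative yields a contradiction, so the size-$2$ matching exists. Getting the book-keeping of these two König subcases exactly right, and confirming the relevant sets are nonempty and proper (which uses $1 \le t \le n-2$), is the most delicate part of the plan.
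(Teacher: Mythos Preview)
The paper does not supply its own proof of this theorem; it is quoted verbatim from Lov\'asz and Plummer with a citation and used as a black box, so there is nothing to compare your approach against on the paper's side.

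That said, your proposal is a correct, self-contained proof. The sufficiency direction is the standard Hall argument and is fine as written. In the necessity direction your case analysis is sound: if the bipartite graph between $Y$ and $A\setminus X$ has no matching of size two, K\"onig's theorem gives a vertex cover of size at most one, and your three subcases (empty edge set; cover vertex $a^\ast\in A\setminus X$; cover vertex $y^\ast\in Y$) are exhaustive. In each subcase the auxiliary set you build ($X'=(A\setminus X)\setminus\{a^\ast\}$ or $Y''=Y\setminus\{y^\ast\}$) is indeed nonempty and proper thanks to $1\le t\le n-2$, so your ``strict Hall'' preliminary observation applies and yields the contradiction. The one tacit assumption you rely on is that matching covered graphs are connected; this is the standard convention and is consistent with how the paper uses the term, but you may want to state it explicitly when you write the argument out.
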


It means that the degree of a vertex in braces is at least three by \Cref{thm:brc-chrc-deg}. The following corollary follows \Cref{thm:brc-chrc-deg} directly.

\begin{corollary} \cite{HE2025153}
\label{cor:x-intsct-b-sz}
Let $\GG[A, B]$ be a brace. Assume that $X \subset \VV(\GG)$, $|X \cap B| \le |B| - 2$ and $N(X \cap B) \subseteq X \cap A$. If $|X \cap A| = |X \cap B|$,
then $X=\emptyset$; if $|X \cap A| = |X \cap B| + 1$, then $X \cap B = \emptyset$.
\end{corollary}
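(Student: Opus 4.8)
The plan is to prove Corollary~\ref{cor:x-intsct-b-sz} as a direct application of the brace characterization in Theorem~\ref{thm:brc-chrc-deg}, treating the two cases separately but with a common underlying idea: the hypothesis $N(X \cap B) \subseteq X \cap A$ says that every neighbour of a vertex in $X \cap B$ already lies in $X \cap A$, so $X \cap B$ is a set on the $B$-side whose neighbourhood is small. Since Theorem~\ref{thm:brc-chrc-deg} is stated for subsets of the colour class $A$, I first need to apply it to subsets of $B$ instead; I would note that the characterization is symmetric in the two colour classes (a brace $\GG[A,B]$ is also a brace $\GG[B,A]$), so $|N(Y)| \ge |Y| + 2$ holds for every $Y \subseteq B$ with $1 \le |Y| \le |B| - 2$ as well.

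Set $Y \coloneqq X \cap B$ and observe that $N(Y) \subseteq X \cap A$ by hypothesis, hence $|N(Y)| \le |X \cap A|$. The first case assumes $|X \cap A| = |X \cap B| = |Y|$, giving $|N(Y)| \le |Y|$. If $Y$ were nonempty and satisfied $1 \le |Y| \le |B| - 2$ (the latter is exactly the assumption $|X \cap B| \le |B| - 2$), the brace inequality would force $|N(Y)| \ge |Y| + 2 > |Y|$, a contradiction; therefore $Y = X \cap B = \emptyset$. But then $N(X \cap B) = \emptyset$, and since $|X \cap A| = |X \cap B| = 0$ we also get $X \cap A = \emptyset$, so $X = \emptyset$. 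In the second case, $|X \cap A| = |Y| + 1$, so $|N(Y)| \le |Y| + 1 < |Y| + 2$; the same application of Theorem~\ref{thm:brc-chrc-deg} (valid whenever $1 \le |Y| \le |B|-2$) is again violated, forcing $Y = X \cap B = \emptyset$, which is precisely the claimed conclusion.

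The one genuine subtlety I would be careful about is the boundary condition $1 \le |Y| \le |B| - 2$ required to invoke Theorem~\ref{thm:brc-chrc-deg}. The upper bound is guaranteed by the standing hypothesis $|X \cap B| \le |B| - 2$, so the only thing to rule out is $|Y| = 0$, and this is handled correctly because in both cases I am proving that $Y$ must be empty: I assume $|Y| \ge 1$ for contradiction, derive the violation of the inequality, and conclude $Y = \emptyset$. I would also double-check that applying the characterization to the $B$-side is legitimate even though the theorem is literally stated for $X \subseteq A$; the cleanest way is simply to record the symmetric statement explicitly before the main argument. With that observation in place the proof is essentially a one-line inequality comparison in each case, so I do not expect any real obstacle beyond stating the colour-class symmetry and the boundary condition cleanly.
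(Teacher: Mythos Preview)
Your argument is correct and matches the paper's approach: the paper does not spell out a proof but simply records that the corollary follows directly from Theorem~\ref{thm:brc-chrc-deg}, and your proof is precisely that direct derivation, including the necessary observation that the neighbourhood inequality applies symmetrically to subsets of $B$. The boundary check $1 \le |Y| \le |B|-2$ and the handling of the $|Y|=0$ case are exactly the details one needs to make ``follows directly'' rigorous.
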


Let $e$ be a nonthin edge of a brace $\GG$. Then there exists an edge cut $\partial(X)$ of $\GG$, such that $\partial(X)$ is a nontrivial tight cut in $\GG - e$, that is both $|X|$ and $|\overline{X}|$ are greater than 3 by the definition of nonthin edges; then $\partial(X)$ is an $S$-cut of $\GG$ associated with $e$. Note that there may exist more than one $S$-cuts of $\GG$ associated with $e$.

\begin{proposition} \cite{LOVASZ1987187}
\label{propo:tght-cut-charac}
Let $\GG[A, B]$ be a matching covered bipartite graph. Let $\partial(X)$ be an edge cut of $\GG$ such that $|X|$ is odd. Then $\partial(X)$ is tight if and only if the following statements hold. \begin{enumerate}
    \item $|| X \cap A | - | X \cap B|| = 1$;
    \item $\EE [ X \cap A, \overline{X} \cap B] = \emptyset$ if $|X \cap B| - | X \cap A | = 1$ and $\EE [ X \cap B, \overline{X} \cap A ] = \emptyset$ if $| X \cap A| - | X \cap B | = 1$.
\end{enumerate}
\end{proposition}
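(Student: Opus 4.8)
The plan is to analyze, for an arbitrary perfect matching $M$, exactly how the edges of $M$ interact with the four blocks cut out by the bipartition together with $X$, and to extract two quantities that do not depend on $M$. Write $X_A = X\cap A$, $X_B = X\cap B$, $Y_A = \overline{X}\cap A$, $Y_B = \overline{X}\cap B$. Since $\GG$ is matching covered it has a perfect matching, so $|A| = |B|$ and the blocks are well defined. Fix a perfect matching $M$. Every vertex of $X_A$ is matched either inside $X$ (to a vertex of $X_B$) or across the cut (to a vertex of $Y_B$); let $a$ count the $M$-edges of the second kind. Symmetrically, every vertex of $X_B$ is matched either to $X_A$ or to $Y_A$, and let $b$ count those matched into $Y_A$. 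Writing $m$ for the number of $M$-edges lying entirely inside $X$ (necessarily between $X_A$ and $X_B$), we get $|X_A| = m + a$ and $|X_B| = m + b$.

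Two consequences form the backbone of the argument. First, $|\partial(X)\cap M| = a + b$, since the crossing $M$-edges are precisely those counted by $a$ and $b$. Second, $|X_A| - |X_B| = a - b$, a quantity that is therefore the same for every perfect matching, while $|X| = |X_A| + |X_B| = 2m + a + b$ gives $a + b \equiv |X| \equiv 1 \pmod 2$. Hence $|\partial(X)\cap M|$ is odd, and in particular at least $1$, for every $M$, and $\partial(X)$ is tight exactly when $a + b = 1$ for all $M$.

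For the only-if direction I would assume tightness, so $a + b = 1$ for every $M$; as $a,b \ge 0$ this forces $\{a,b\} = \{0,1\}$ and hence $a - b = \pm 1$. Combined with the invariant $|X_A| - |X_B| = a - b$ this yields $||X_A| - |X_B|| = 1$, which is statement~1. Suppose next that $|X_B| - |X_A| = 1$; then $a - b = -1$, so keeping $a + b = 1$ forces $a = 0$ for every $M$, i.e. no edge of any perfect matching joins $X_A$ to $Y_B$. This is the step I expect to be the crux: to conclude $\EE[X_A, Y_B] = \emptyset$ (no edge at all, not merely no matching edge) I invoke the matching-covered hypothesis, since any edge $uv$ with $u \in X_A$ and $v \in Y_B$ would lie in some perfect matching, contradicting $a = 0$. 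The symmetric case $|X_A| - |X_B| = 1$ gives $\EE[X_B, Y_A] = \emptyset$, establishing statement~2.

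For the converse I would assume statements~1 and~2 and show $a + b = 1$ for every $M$. By statement~1, $|X_A| - |X_B| = \pm 1$; say it equals $1$, the other sign being symmetric. Then statement~2 gives $\EE[X_B, Y_A] = \emptyset$, so no $M$-edge can cross out of $X_B$, forcing $b = 0$ for every perfect matching; the invariant $a - b = |X_A| - |X_B| = 1$ then gives $a = 1$, whence $|\partial(X)\cap M| = a + b = 1$. Since $M$ was arbitrary, $\partial(X)$ is tight. Beyond the bookkeeping of the four blocks, the only genuine subtleties are matching the correct clause of statement~2 to the sign of $|X_A| - |X_B|$ and the upgrade from \emph{no matching edge crosses} to \emph{no edge crosses}, which is exactly where the matching-covered hypothesis is used.
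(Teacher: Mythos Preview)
The paper does not actually prove this proposition; it is stated with a citation to Lov\'asz~\cite{LOVASZ1987187} and then used without argument. There is therefore no in-paper proof to compare against. Your proof is correct and is the standard counting argument: the invariance of $a-b=|X_A|-|X_B|$ across perfect matchings, together with the parity observation $a+b\equiv |X|\pmod 2$, gives both directions cleanly, and you correctly identify that the matching-covered hypothesis is precisely what upgrades ``no matching edge crosses'' to ``no edge crosses''.
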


The following proposition of $S$-cuts follows \Cref{propo:tght-cut-charac} immediately.

\begin{proposition} \cite{HE2025153}
\label{propo:nonthin-scut}
Let $\GG[A, B]$ be a brace and let $uv$ be a nonthin edge of $\GG$. Assume that $\partial(X)$ is an $S$-cut of $\GG$ associated with $uv$, such that $u \in X \cap A$. Then $| X \cap A | = | X \cap B | - 1$ and $\EE [ X \cap A, \overline{X} \cap B] = \{ uv \}$.
\end{proposition}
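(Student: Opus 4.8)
The plan is to locate the endpoint $v$ first, and then to read off both conclusions from the tight-cut characterization \Cref{propo:tght-cut-charac}, by tracking how the two relevant cross-edge sets change when $uv$ is deleted. Since $\GG[A,B]$ is bipartite and $u \in X \cap A$, the other endpoint satisfies $v \in B$. I would begin by arguing that $v \notin X$. Indeed, if $v \in X \cap B$, then $uv$ has both ends inside $X$, so it belongs to neither cut and to neither of the cross-edge sets $\EE[X \cap A, \overline{X} \cap B]$ and $\EE[X \cap B, \overline{X} \cap A]$; consequently these two sets, as well as the cardinalities $|X \cap A|$ and $|X \cap B|$, are identical in $\GG$ and in $\GG - uv$. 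Applying the ``only if'' direction of \Cref{propo:tght-cut-charac} to $\GG - uv$ (where $\partial(X)$ is tight) and then the ``if'' direction to $\GG$, I would conclude that $\partial(X)$ is also tight in $\GG$; but $\partial(X)$ is nontrivial since both shores exceed $3$, contradicting that the brace $\GG$ has no nontrivial tight cut. Hence $v \in \overline{X} \cap B$, and in particular $uv \in \EE[X \cap A, \overline{X} \cap B]$.

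Next I would apply \Cref{propo:tght-cut-charac} to the matching covered bipartite graph $H \coloneqq \GG - uv$, in which $\partial(X)$ is tight with $|X|$ odd. This yields condition~1, namely $\bigl|\,|X \cap A| - |X \cap B|\,\bigr| = 1$, together with the corresponding emptiness from condition~2. The key observation is that deleting the single edge $uv$, which joins $X \cap A$ to $\overline{X} \cap B$, removes exactly one element from $\EE[X \cap A, \overline{X} \cap B]$ and leaves the second cross-edge set untouched: $\EE_{H}[X \cap B, \overline{X} \cap A] = \EE_{\GG}[X \cap B, \overline{X} \cap A]$, while $\EE_{H}[X \cap A, \overline{X} \cap B] = \EE_{\GG}[X \cap A, \overline{X} \cap B] \setminus \{uv\}$.

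The sign of $|X \cap A| - |X \cap B|$ is then pinned down by the brace hypothesis. If $|X \cap A| - |X \cap B| = 1$, then condition~2 of \Cref{propo:tght-cut-charac} for $H$ forces $\EE_{H}[X \cap B, \overline{X} \cap A] = \emptyset$, hence also $\EE_{\GG}[X \cap B, \overline{X} \cap A] = \emptyset$ by the observation above; but then both conditions of \Cref{propo:tght-cut-charac} hold for $\GG$, making $\partial(X)$ a nontrivial tight cut of $\GG$, which is again impossible. Therefore $|X \cap A| = |X \cap B| - 1$. Feeding this sign back into condition~2 for $H$ gives $\EE_{H}[X \cap A, \overline{X} \cap B] = \emptyset$, and since this set equals $\EE_{\GG}[X \cap A, \overline{X} \cap B] \setminus \{uv\}$ with $uv$ belonging to $\EE_{\GG}[X \cap A, \overline{X} \cap B]$, I conclude $\EE_{\GG}[X \cap A, \overline{X} \cap B] = \{uv\}$, as required.

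I expect the only real obstacle to be the bookkeeping: correctly identifying which of the two cross-edge sets is perturbed by the deletion of $uv$ and which is invariant, and then deploying the hypothesis ``$\GG$ has no nontrivial tight cut'' twice in exactly the right way, once to push $v$ out of $X$ and once to fix the sign, so that the surviving emptiness condition in $H$ upgrades to the precise equality $\EE[X \cap A, \overline{X} \cap B] = \{uv\}$ rather than a mere inclusion.
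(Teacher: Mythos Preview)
Your argument is correct and is precisely the derivation the paper alludes to when it states that the proposition ``follows \Cref{propo:tght-cut-charac} immediately'': you apply the tight-cut characterization to $\GG-uv$, then use the absence of nontrivial tight cuts in the brace $\GG$ twice, first to force $v\in\overline{X}\cap B$ and then to rule out the sign $|X\cap A|-|X\cap B|=1$. The paper gives no further details beyond that one-line remark, so your write-up simply makes explicit what the paper leaves implicit.
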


\subsection{Structural Properties of Planar Bipartite Graphs}
A graph is \emph{planar} if it can be embedded in the Euclidean plane such that no two edges intersect except at a common vertex \cite{Bondy2008GraphT}. Such an embedding is called a planar embedding or a plane graph. A plane graph partitions the plane into a set of connected regions called faces. The number of vertices ($n$), edges ($m$), and faces ($f$) of any connected planar graph are related by a fundamental equation.

\begin{theorem}[Euler's Formula \cite{10.5555/22577}, \cite{Euler1736}]
For any connected planar graph with $n$ vertices, $m$ edges, and $f$ faces, the following relation holds: $n - m + f = 2$.
\end{theorem}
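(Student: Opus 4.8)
The plan is to prove Euler's formula by induction on the number of edges $m$ of the connected plane graph $G$, tracking how the quantity $n - m + f$ behaves under edge deletion. I would take as the base-type case the acyclic situation: if $G$ contains no cycle, then being connected it is a tree, so $m = n - 1$, and a tree drawn in the plane bounds no region, leaving only the single unbounded face, whence $f = 1$. Substituting gives $n - m + f = n - (n-1) + 1 = 2$, which settles this case directly.

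For the inductive step, suppose the connected plane graph $G$ contains a cycle, and fix an edge $e$ lying on such a cycle. Since every edge on a cycle is non-separating, $G - e$ remains connected, and it is again a plane graph under the inherited embedding, now with $n$ vertices and $m - 1$ edges. The crucial geometric fact is that an edge on a cycle lies on the boundary of two distinct faces of $G$, so deleting it merges precisely these two faces into a single face while leaving every other face unchanged; hence $G - e$ has $f - 1$ faces. Applying the induction hypothesis to $G - e$ yields $n - (m - 1) + (f - 1) = 2$, which simplifies immediately to $n - m + f = 2$, completing the induction.

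The step I expect to be the main obstacle is precisely the topological claim invoked above: that an edge lying on a cycle separates two distinct faces, so that its removal decreases the face count by exactly one rather than leaving it unchanged or affecting more than two faces. This is where the planarity hypothesis does all of the work, and a fully rigorous justification rests on the Jordan curve theorem, namely that a simple closed curve in the plane partitions the remaining points into exactly one bounded and one unbounded region. Making this precise, for instance verifying that the two sides of $e$ genuinely belong to different faces and that no third face is altered, is the delicate part; by comparison, the arithmetic bookkeeping of the induction is entirely routine.
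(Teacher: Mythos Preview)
Your argument is the standard and correct induction on the number of edges: handle the acyclic (tree) case directly, and for a graph containing a cycle delete a cycle edge, observe that connectivity is preserved and exactly two faces merge, then apply the induction hypothesis. You also correctly flag that the only non-trivial ingredient is the topological fact that a cycle edge bounds two distinct faces, which ultimately rests on the Jordan curve theorem.

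There is nothing to compare against here: the paper does not give its own proof of Euler's formula at all. It simply states the result with citations to the literature and then uses it as a black box to derive the edge bound $m \le 2n - 4$ for planar bipartite graphs. So your proposal supplies a proof where the paper is content to quote a classical theorem; what you have written is entirely adequate for that purpose.
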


Euler's formula is a powerful tool for deriving bounds on the number of edges a planar graph can have. The key is to relate the number of faces to the number of edges. In any simple graph with at least 3 vertices, the boundary of each face in a planar embedding must consist of at least 3 edges. Since each edge borders at most two faces, a simple double-counting argument yields the inequality $3f \le 2m$. Substituting this into Euler's formula gives $n - m + (2m/3) \ge 2$, which simplifies to the well-known bound $m \le 3n - 6$ for any simple planar graph \cite{Cioab2009AFC}.

However, for bipartite graphs, a stronger constraint applies. A graph is bipartite if and only if it contains no cycles of odd length \cite{könig1936theorie}. This means the shortest possible cycle length, or \emph{girth}, in a bipartite graph with a cycle is at least 4. Consequently, in any planar embedding of a bipartite graph, the boundary of every face must be of length at least 4. This leads to a tighter inequality relating faces and edges: $4f \le 2m$, or $2f \le m$ \cite{Harary1969,scheinerman13,Tutte1963HowTD}. By substituting this improved bound into Euler's formula, we obtain a stricter limit on the edge density of planar bipartite graphs.

\begin{theorem} [\cite{10.5555/22577,Harary1969}]
\label{thm:edg-vrtx-bound}
Let $\GG$ be a simple, connected, planar bipartite graph with $n \ge 3$ vertices and $m$ edges. Then $m \le 2n - 4$.
\end{theorem}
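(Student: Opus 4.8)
The plan is to combine Euler's formula with a girth-based double-counting bound on the number of faces, exactly along the lines sketched in the paragraph preceding the statement. Fix a planar embedding of $\GG$ with $n$ vertices, $m$ edges, and $f$ faces. Since $\GG$ is connected, Euler's formula gives $n - m + f = 2$, equivalently $f = m - n + 2$. If I can establish the face bound $2f \le m$, then substituting yields $2(m - n + 2) \le m$, which rearranges directly to $m \le 2n - 4$. Thus the whole argument reduces to proving $2f \le m$.

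To obtain $2f \le m$, I would use the double-counting identity $\sum_{F} \ell(F) = 2m$, where the sum runs over all faces $F$ of the embedding and $\ell(F)$ denotes the length of the boundary walk of $F$ (an edge is counted twice when $F$ lies on both its sides, as happens for bridges). This identity holds because each edge contributes exactly $2$ to the total, being incident to two face-sides. The crux is then the claim that $\ell(F) \ge 4$ for every face $F$. Since $\GG$ is bipartite it contains no odd cycle by \cite{könig1936theorie}, and in particular no triangle, so no facial walk is a $3$-cycle; being simple, $\GG$ has no loops or parallel edges, which rules out closed walks of length $1$ or $2$. Combining $\ell(F) \ge 4$ over all faces with the identity gives $4f \le \sum_{F} \ell(F) = 2m$, that is $2f \le m$, completing the reduction.

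The step I expect to require the most care is the claim $\ell(F) \ge 4$, because facial boundary walks need not be simple cycles: pendant edges and, more generally, bridges are traversed twice by a single facial walk, and when $\GG$ is acyclic there is only one face whose boundary repeats every edge. To be safe I would treat the acyclic case separately: if $\GG$ is a tree then $m = n - 1$, and $m = n - 1 \le 2n - 4$ holds immediately since $n \ge 3$. If $\GG$ contains a cycle, then a facial walk of length $2$ would force either a parallel edge or a component equal to a single edge---both impossible since $\GG$ is simple, connected, and has $n \ge 3$ vertices---while a facial walk of length $3$ would force a triangle, impossible in a bipartite graph. Hence $\ell(F) \ge 4$ holds uniformly, and the bound $m \le 2n - 4$ follows. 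I would also remark that the bound is tight, as witnessed by any planar embedding meeting $4f = 2m$ (for instance, a graph all of whose faces are quadrilaterals, such as $C_4$ with $n = 4$, $m = 4 = 2n - 4$).
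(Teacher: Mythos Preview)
Your argument is correct and follows essentially the same route as the paper: Euler's formula $f = m - n + 2$ combined with the face bound $4f \le 2m$ obtained by double counting edge--face incidences, using that bipartiteness forces every face to have length at least $4$. You are in fact more careful than the paper in treating the acyclic case separately and justifying $\ell(F) \ge 4$ for facial walks that are not simple cycles.
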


\begin{proof}
From Euler's formula, $f = m - n + 2$. As argued above, the bipartite nature of $\GG$ implies that every face is bounded by at least 4 edges, leading to the inequality $4f \le 2m$. Substituting the expression for $f$ yields $4(m - n + 2) \le 2m$. This simplifies to $4m - 4n + 8 \le 2m$, which gives $2m \le 4n - 8$, and finally $m \le 2n - 4$.
\end{proof}

This inequality, $m \le 2n - 4$, is not merely a technical lemma; it represents a fundamental ``sparsity mandate'' for planar bipartite graphs. It has a direct consequence for the average degree of the graph, which is given by $2m/n$. From the inequality, we have $2m/n \le (4n - 8)/n = 4 - 8/n$. This shows that the average degree of any planar bipartite graph is strictly less than 4. This inherent sparsity is in stark contrast to the connectivity properties required by many graph classes, and as we will demonstrate, it creates an irreconcilable tension with the properties of a brace without cubic vertices.

\section{Main results}
He and Lu \cite{HE2025153} considers braces of at least six vertices where the minimum degree is three. A brace without cubic vertices is one in which every vertex has degree at least 4. That is, $n_3=0$ and $S_1 = \VV(\GG)$. This seemingly simple increase in the minimum degree requirement from 3 to 4 has profound structural consequences when combined with the constraints of planarity.

\begin{theorem}
\label{thm:brc-without-cubic}
The set of planar braces without any cubic vertices is empty.
\end{theorem}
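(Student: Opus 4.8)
The plan is to derive a contradiction from a simple double-counting of edges, playing the "sparsity mandate" $m \le 2n-4$ against the degree lower bound forced by the absence of cubic vertices. Suppose, for contradiction, that $\GG[A,B]$ is a planar brace with no cubic vertices. Since $\GG$ is matching covered it is connected, and by \Cref{thm:brc-chrc-deg} every vertex of a brace has degree at least $3$; combined with the hypothesis $n_3 = 0$, this upgrades the minimum degree to $\delta(\GG) \ge 4$, so that every vertex has degree at least $4$.

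Next I would count the edges from below using the handshake identity. Writing $n = |\VV(\GG)|$ and $m = |\EE(\GG)|$, we have $2m = \sum_{v} d(v) \ge 4n$, hence $m \ge 2n$. At the same time, $\GG$ is a simple, connected, planar \emph{bipartite} graph, and since $\delta(\GG)\ge 4$ each colour class must contain at least four vertices, so $n \ge 8 \ge 3$ and \Cref{thm:edg-vrtx-bound} applies to give $m \le 2n - 4$. Chaining the two bounds yields $2n \le m \le 2n - 4$, i.e.\ $2n \le 2n - 4$, which is impossible. Therefore no such brace exists, and the set of planar braces without cubic vertices is empty.

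There is no genuine analytic obstacle here; the argument is a one-line tension between two edge counts. The only points that require care are bookkeeping rather than difficulty: first, one must invoke \Cref{thm:brc-chrc-deg} explicitly to justify that a brace has minimum degree $\ge 3$, so that ``no cubic vertices'' really does force $\delta(\GG)\ge 4$ (not merely $\delta(\GG) > 3$ for the vertices we happen to examine); and second, one must confirm that the hypotheses of \Cref{thm:edg-vrtx-bound} are met, namely simplicity (assumed throughout the paper), connectivity (from matching coveredness), and $n \ge 3$ (automatic once $\delta(\GG)\ge 4$). Once these are in place, the contradiction $2n \le 2n-4$ completes the proof.
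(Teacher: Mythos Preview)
Your proof is correct and follows essentially the same approach as the paper: derive $m \ge 2n$ from $\delta(\GG)\ge 4$ via handshaking, combine with the planar bipartite bound $m \le 2n-4$ from \Cref{thm:edg-vrtx-bound}, and read off the contradiction $2n \le 2n-4$. Your additional bookkeeping (justifying connectivity and $n\ge 3$) is slightly more explicit than the paper's version but changes nothing substantive.
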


\begin{theorem}
\label{thm:no-cubic-vrtx-plnr-brace}
Let $\GG$ be a planar brace of at least six vertices. Let $n_3$ be the number of cubic vertices in $\GG$. Then $n_3 \ge 8$.
\end{theorem}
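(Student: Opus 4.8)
The plan is to combine the planar bipartite sparsity bound with the minimum-degree guarantee for braces, via an elementary handshake count. First I would check that the hypotheses of \Cref{thm:edg-vrtx-bound} are all met: a brace with at least six vertices is $2$-extendable and hence connected, it is simple by the standing assumption of the paper, it is bipartite by definition, and $n \coloneqq |\VV(\GG)| \ge 6 \ge 3$. Writing $m \coloneqq |\EE(\GG)|$, \Cref{thm:edg-vrtx-bound} then yields the sparsity bound $m \le 2n - 4$.

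Next I would invoke \Cref{thm:brc-chrc-deg}, which forces every vertex of a brace to have degree at least $3$; consequently the cubic vertices are exactly the minimum-degree vertices. Partitioning $\VV(\GG)$ into the $n_3$ cubic vertices (degree exactly $3$) and the remaining $n - n_3$ vertices (degree at least $4$), the handshake lemma gives
\begin{equation*}
2m \;=\; \sum_{v \in \VV(\GG)} d(v) \;\ge\; 3\,n_3 + 4\,(n - n_3) \;=\; 4n - n_3 .
\end{equation*}

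Finally I would chain the two inequalities. Feeding the sparsity bound $2m \le 4n - 8$ into $4n - n_3 \le 2m$ gives $4n - n_3 \le 4n - 8$, and cancelling $4n$ leaves $n_3 \ge 8$, as claimed.

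The derivation is essentially a two-line count, so I do not anticipate a genuine obstacle; the only point demanding care is verifying that the structural preconditions of \Cref{thm:edg-vrtx-bound}---in particular connectivity and $n \ge 3$---are indeed guaranteed by the brace axioms, which they are. I would also note that this same computation simultaneously reproves \Cref{thm:brc-without-cubic}, since $n_3 \ge 8 > 0$ rules out any cubic-free planar brace; the separate matter of whether the value $n_3 = 8$ is actually attained (its tightness) would require exhibiting an explicit extremal planar brace and falls outside this lower-bound argument.
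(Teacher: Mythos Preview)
Your proposal is correct and follows essentially the same approach as the paper: partition the vertices by degree, bound $2m$ below by $4n - n_3$ via the handshake lemma, bound $2m$ above by $4n - 8$ via \Cref{thm:edg-vrtx-bound}, and combine. If anything, you are slightly more explicit than the paper in verifying the hypotheses (connectedness, simplicity, $n \ge 3$) needed to invoke the planar bipartite edge bound.
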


By combining \Cref{thm:frst-nthin-deg4} with \Cref{thm:no-cubic-vrtx-plnr-brace}, we can place a strict, explicit upper bound on the number of nonthin edges in the subgraph induced by $S_1$.

\begin{corollary}
\label{cor:nonthin_bound}
Let $\GG$ be a planar brace on $n$ vertices. The subgraph spanned by the nonthin edges with both endpoints in $S_1$ is a forest containing at most $n - 9$ edges.
\end{corollary}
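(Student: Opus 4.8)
The plan is to assemble the corollary from two ingredients already in hand: the forest structure supplied by \Cref{thm:frst-nthin-deg4} and the cubic-vertex count of \Cref{thm:no-cubic-vrtx-plnr-brace}. The only additional fact needed is the elementary one that a forest on $k$ vertices carries at most $k-1$ edges, so there is no genuinely new estimate to prove; the work is entirely in combining the bounds correctly. Throughout I work in the regime $n \ge 6$, where \Cref{thm:no-cubic-vrtx-plnr-brace} applies.

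First I would invoke \Cref{thm:frst-nthin-deg4} verbatim: the subgraph $F$ spanned by the edges of $\EE\!\left(\GG[S_1]\right) \setminus \EE_{T}(\GG)$ — that is, by the nonthin edges whose two endpoints both lie in $S_1$ — is a forest. By construction every endpoint of such an edge belongs to $S_1$, so $\VV(F) \subseteq S_1$ and hence $|\VV(F)| \le |S_1|$. Since a forest with $k$ vertices has at most $k-1$ edges, this yields $|\EE(F)| \le |S_1| - 1$.

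The second step is to bound $|S_1|$ from above. By \Cref{thm:brc-chrc-deg} every vertex of a brace on at least six vertices has degree at least $3$, so a vertex fails to lie in $S_1 = \{\,x : d(x) \ge 4\,\}$ exactly when it is cubic; therefore $|S_1| = n - n_3$, where $n_3$ is the number of cubic vertices. Because $\GG$ is a planar brace on $n \ge 6$ vertices, \Cref{thm:no-cubic-vrtx-plnr-brace} gives $n_3 \ge 8$, whence $|S_1| \le n - 8$. Combining the two displays produces $|\EE(F)| \le |S_1| - 1 \le (n-8) - 1 = n - 9$, as claimed. I expect no real obstacle here; the single point that must be handled carefully is that the vertices of the spanning forest are confined to $S_1$, so that its vertex count is bounded by $|S_1|$ rather than by $n$ — once that is noted, the substitution $|S_1| = n - n_3 \le n - 8$ does all the remaining work.
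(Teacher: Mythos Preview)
Your proposal is correct and follows essentially the same route as the paper: invoke \Cref{thm:frst-nthin-deg4} to get a forest on a vertex set contained in $S_1$, bound its edges by $|S_1|-1$, then use $|S_1| = n - n_3 \le n - 8$ from \Cref{thm:no-cubic-vrtx-plnr-brace}. Your added care in noting the $n \ge 6$ hypothesis and justifying $|S_1| = n - n_3$ via the minimum-degree property of braces is a slight refinement but does not change the argument.
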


\begin{theorem}
\label{thm:thin_edge_analysis}
Let $\GG$ be a planar brace with $n_3$ cubic vertices, and assume $n_3 < 0.4|\VV(\GG)|$. The number of thin edges, $|\EE_T|$, is given by the bound: $|\EE_{T}| \ge |\VV(\GG)| - \frac{5}{2}n_3 + 1$. However, the planarity constraint $n_3 \ge 8$ imposes a ceiling on this lower bound. For any planar brace satisfying the condition $n_3 < 0.4|\VV(\GG)|$, the guaranteed number of thin edges is bounded by: $|\EE_{T}| \le |\VV(\GG)| - 19$.
\end{theorem}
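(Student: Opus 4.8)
The plan is to derive this statement as a direct synthesis of two results already established in this excerpt: the He--Lu lower bound on thin edges in an arbitrary brace (\Cref{thm:lwr-bnd-thn-edg-brc}) and the planarity-enforced floor $n_3 \ge 8$ on the number of cubic vertices (\Cref{thm:no-cubic-vrtx-plnr-brace}). The argument splits cleanly into two halves: first recasting the He--Lu bound into the stated per-vertex form, and then exploiting the monotonicity of that bound together with the cubic-vertex floor to extract the ceiling.

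For the first half, I would set $k = n_3 / |\VV(\GG)|$. The hypothesis $n_3 < 0.4|\VV(\GG)|$ is precisely the condition $k < 0.4$ demanded by \Cref{thm:lwr-bnd-thn-edg-brc}, so that theorem applies and guarantees at least $\frac{2-5k}{2}|\VV(\GG)| + 1$ thin edges. Expanding and using $k|\VV(\GG)| = n_3$ gives $\frac{2-5k}{2}|\VV(\GG)| + 1 = |\VV(\GG)| - \frac{5}{2}n_3 + 1$, which is exactly the claimed lower bound $|\EE_T| \ge |\VV(\GG)| - \frac{5}{2}n_3 + 1$.

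For the second half, I would regard the lower bound as a function $L(n_3) = |\VV(\GG)| - \frac{5}{2}n_3 + 1$ of the number of cubic vertices, with $|\VV(\GG)|$ held fixed. Since the coefficient of $n_3$ is $-\frac{5}{2} < 0$, the function $L$ is strictly decreasing in $n_3$, so the guaranteed lower bound is largest exactly when $n_3$ is smallest. By \Cref{thm:no-cubic-vrtx-plnr-brace}, every planar brace on at least six vertices satisfies $n_3 \ge 8$, whence $L(n_3) \le L(8) = |\VV(\GG)| - \frac{5}{2}\cdot 8 + 1 = |\VV(\GG)| - 19$. This delivers the ceiling $|\EE_T| \le |\VV(\GG)| - 19$ on the certified count.

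The substance here is bookkeeping rather than new structural analysis, so I do not anticipate a genuine technical obstacle in the derivation itself. The one point that warrants care is the interpretation of the second inequality: it is \emph{not} an upper bound on the true number of thin edges, but rather a ceiling on the best lower bound this formula can certify, attained in the extremal case $n_3 = 8$. I would state this explicitly to forestall any appearance of contradiction with the lower bound of the first half, and would remark that the ceiling is sharp precisely when the cubic-vertex floor of \Cref{thm:no-cubic-vrtx-plnr-brace} is met.
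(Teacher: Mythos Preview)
Your proposal is correct and mirrors the paper's own proof essentially step for step: both derive the lower bound by rewriting the He--Lu formula via $k = n_3/|\VV(\GG)|$, observe that the resulting expression is strictly decreasing in $n_3$, and then substitute the planarity floor $n_3 \ge 8$ from \Cref{thm:no-cubic-vrtx-plnr-brace} to obtain the ceiling $|\VV(\GG)| - 19$. Your explicit remark that the second inequality bounds the \emph{certified} count rather than the true number of thin edges is a helpful clarification that the paper also makes, albeit more briefly.
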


\section{On the Existence of Planar Braces without Cubic Vertices}
\label{sec:exstnc-plnr-brc}
We must first establish whether the object of study, a planar brace without any cubic vertices, can exist. As foreshadowed in the introduction, the competing demands of the brace structure and planarity lead to a definitive negative conclusion.

\begin{proof} [Proof of \Cref{thm:brc-without-cubic}]
The proof proceeds by contradiction.
\begin{enumerate}
    \item \textbf{Assumption:} Let us assume that such a brace $\GG$ exists. By definition, $\GG$ is simultaneously a brace without cubic vertices and a planar graph.

    \item \textbf{Properties from the Brace Definition:}
    \begin{itemize}
        \item As a brace, $\GG$ is a connected bipartite graph \cite{HE2025153}.
        \item As a brace on more than 4 vertices, $\GG$ has a minimum vertex degree $\delta(\GG) \ge 3$ \cite{HE2025153}.
        \item The condition ``without any cubic vertices'' implies that $\GG$ has no vertices of degree 3. Combined with the minimum degree requirement for a brace, this means that every vertex in $\GG$ must have degree at least 4. Formally, $\delta(\GG) \ge 4$.
    \end{itemize}

    \item \textbf{Consequence of the Minimum Degree (Degree Sum Argument):} We consider the sum of the degrees of all vertices in $\GG$. By the handshaking lemma, this sum is equal to twice the number of edges, $m$:
    $\sum_{v \in \VV(\GG)} d(v) = 2m$. Since every vertex has a degree of at least 4, we can establish a lower bound on this sum: $\sum_{v \in \VV(\GG)} d(v) \ge \sum_{v \in \VV(\GG)} 4|\VV(\GG)| = 4n$. Combining these two expressions, we arrive at the inequality $2m \ge 4n$, which simplifies to $m \ge 2n$. This inequality is a direct consequence of $\GG$ being a brace without any cubic vertices. It mandates a certain level of edge density.

    \item \textbf{Consequence of Planarity:}
    \begin{itemize}
        \item As a planar brace, $\GG$ is a simple, connected, planar bipartite graph.
        \item As established in \Cref{thm:edg-vrtx-bound}, any such graph with $n \ge 3$ vertices must satisfy the upper bound on the number of edges derived from Euler's formula: $m \le 2n - 4$. This inequality is a direct consequence of $\GG$ being planar and bipartite. It imposes a strict limit on edge density, a ``sparsity mandate''.
    \end{itemize}

    \item \textbf{The Contradiction:} We have now derived two necessary, yet incompatible, conditions on the number of edges $m$ in our hypothetical brace $\GG$:
    \begin{itemize}
        \item From the cubic vertex less brace property: $m \ge 2n$.
        \item From the planar bipartite property: $m \le 2n - 4$.
    \end{itemize}
    Combining these inequalities yields: $2n \le m \le 2n - 4$. This chain of inequalities implies that $2n \le 2n - 4$, which simplifies to the patent absurdity $0 \le -4$.

    \par The assumption that a planar brace without any cubic vertices $\GG$ exists, leads to a logical contradiction. Therefore, the initial assumption must be false. No such brace can exist. Hence, the set of planar braces without any cubic vertices is empty. \qed
\end{enumerate} 
\end{proof}

This proof does more than simply answer a question of existence; it reveals a quantitative ``structural deficit''. The properties of a brace without any cubic vertices demand an ``edge budget'' of at least $2n$ edges to satisfy its minimum degree requirements. However, the properties of a planar bipartite graph impose a strict ``edge allowance'' of at most $2n - 4$ edges. The impossibility arises because the demand exceeds the allowance by a margin of at least 4. This perspective naturally leads to the next question: how can a graph be both a brace and planar? It can only do so by relaxing the minimum degree condition from 4 back to the brace with required minimum degree of 3. This requires the presence of cubic vertices, which lower the total degree sum and allow the brace to meet its planarity-imposed edge budget. The next section is dedicated to quantifying exactly how many such cubic vertices are necessary to resolve this structural deficit.

\section{A Structural Mandate: The Minimum Number of Cubic Vertices in Planar Braces}
\Cref{sec:exstnc-plnr-brc} establishes that no planar braces without any cubic vertices exist. However this non-existence result points us toward a more fundamental and well-posed structural question. The conflict between the degree requirements of a brace and the sparsity constraints of planarity is resolved by the presence of vertices of the lowest possible degree, which is 3. This naturally leads to the following question:

\textit{What is the minimum number of cubic vertices that any planar brace must possess?}

This question seeks to quantify the necessary structural compromise a brace must make to be embedded in the plane. It moves beyond a simple existence question to a problem of extremal graph theory: finding a sharp, non-trivial lower bound on a key structural parameter for an important class of graphs. The solution to this problem provides a new, fundamental theorem about the structure of all planar braces.

\subsection{Theorem Statement and Formal Proof}

We now present and prove \Cref{thm:no-cubic-vrtx-plnr-brace}, which establishes a tight lower bound on the number of cubic vertices in any planar brace.

\begin{proof}[Proof of \Cref{thm:no-cubic-vrtx-plnr-brace}]
Let $\GG = (\VV, \EE)$ be a planar brace with $n = |\VV(\GG)|$ vertices and $m = |\EE(\GG)|$ edges.
\begin{enumerate}
    \item \textbf{Partitioning the Vertex Set:} We partition the vertex set $\VV(\GG)$ based on the degrees of vertices. Let $\VV_3$ be the set of cubic vertices, so $n_3 = |\VV_3|$. Let $S_1$ be the set of noncubic vertices, which, for a brace, are those with degree 4 or greater. The size of this set is $|S_1| = n - n_3$.

    \item \textbf{Degree Sum Lower Bound:} We again use the handshaking lemma, $2m = \sum_{u \in \VV(\GG)} d(u)$. We can split this sum over our partitioned vertex set:
    $$2m = \sum_{u \in \VV_3} d(u) + \sum_{w \in S_1} d(w)$$
    By definition, every vertex in $\VV_3$ has degree exactly 3, and every vertex in $S_1$ has degree at least 4. This allows us to establish a lower bound on the sum of degrees:
    $$2m \ge 3 \cdot |\VV_3| + 4 \cdot |S_1|$$
    Substituting the expressions for the set sizes, we get:
    $$2m \ge 3n_3 + 4(n - n_3)$$
    Simplifying this expression gives us an inequality that captures the degree requirements of the brace structure in terms of its number of cubic vertices:
    $$2m \ge 3n_3 + 4n - 4n_3$$
    $$2m \ge 4n - n_3$$

    \item \textbf{Degree Sum Upper Bound:} As a planar brace, $\GG$ is a planar bipartite graph. From \Cref{thm:edg-vrtx-bound}, we have the strict upper bound on the number of edges: $m \le 2n - 4$. Multiplying by 2 gives an upper bound on the total degree sum that is dictated by planarity: $2m \le 4n - 8$.

    \item \textbf{Combining the Bounds:} We now have two inequalities bounding the same quantity, $2m$. One is a lower bound derived from the brace's internal degree structure, and the other is an upper bound derived from its external topological constraints. We can combine them into a single chain:
    $$4n - n_3 \le 2m \le 4n - 8$$

    \item \textbf{Deriving the Result:} By focusing on the leftmost and rightmost terms of this compound inequality, we obtain a relationship that is independent of the number of edges and vertices:
    \begin{align}
    4n - n_3 &\le 4n - 8 \nonumber \\
    \implies -n_3 &\le -8 \nonumber \\
    \implies n_3 &\ge 8 \nonumber
    \end{align}
\end{enumerate}
This completes the proof. Any graph that is both a brace and planar must contain at least eight cubic vertices. \qed
\end{proof}

This result is not merely a numerical curiosity; it is a sharp structural bound. A mathematical bound is most powerful when it is shown to be ``tight'', meaning there exists an object for which the inequality becomes an equality. To demonstrate the tightness of \Cref{thm:no-cubic-vrtx-plnr-brace}, we must exhibit a planar brace that has exactly eight cubic vertices.

Consider the graph of the cube, denoted $Q_3$.
\begin{itemize}
    \item $Q_3$ has $n=8$ vertices and $m=12$ edges.
    \item It is 3-regular (cubic), meaning every vertex has degree 3. Therefore, for $Q_3$, the number of cubic vertices is $n_3=8$.
    \item $Q_3$ is a well-known planar graph.
    \item $Q_3$ is bipartite.
    \item It is a known result in matching theory that $Q_3$ is a brace.
\end{itemize}

The cube graph $Q_3$ satisfies all the conditions of being a planar brace, and it has exactly $n_3=8$ cubic vertices. Its existence demonstrates that the lower bound established in \Cref{thm:no-cubic-vrtx-plnr-brace} cannot be improved. This elevates the theorem from a simple inequality to a characterization of an extremal property. It establishes the cube graph as a fundamental, minimal object in the study of planar braces, in the sense that any such graph must have at least as many cubic vertices as the cube itself.

Furthermore, for $Q_3$, the planarity bound is also met with equality: $m = 12$ and $2n-4 = 2 \times 8 - 4 = 12$. This shows that $Q_3$ is a maximal planar bipartite graph, one to which no more edges can be added without violating planarity or bipartiteness. This extremal nature in both its degree structure and its edge density underscores its significance as a canonical example in this domain.

\section{Upper bound on nonthin edges in a planar brace}

\begin{proof} [Proof of \Cref{cor:nonthin_bound}]
Let $\GG$ be a planar brace. By \Cref{thm:frst-nthin-deg4}, the subgraph $\GG_{NT}$ spanned by the nonthin edges within the vertex set $S_1$ is a forest. The number of vertices in this subgraph, $|\VV(\GG_{NT})|$, is a subset of $S_1$, so $|\VV(\GG_{NT})| \le |S_1|$. The number of edges in any forest is at most its number of vertices minus one. Therefore, the number of nonthin edges in $\GG$ with both endpoints in $S_1$ is bounded above by $|\EE(\GG_{NT})| \le |S_1| - 1$.

The set $S_1$ consists of all noncubic vertices. Thus, its size is $|S_1| = n - n_3$, where $n_3$ is the number of cubic vertices. By \Cref{thm:no-cubic-vrtx-plnr-brace}, any planar brace of at least six vertices must have $n_3 \ge 8$. This implies an upper bound on the size of $S_1$: $|S_1| = n - n_3 \le n - 8$. Substituting this into the edge bound for the forest gives:
$$|\EE(\GG_{NT})| \le |S_1| - 1 \le n - 8 - 1 = n - 9$$
Thus, the number of nonthin edges with both endpoints of degree four or more is strictly bounded above by $v-9$. \qed
\end{proof}

\begin{remark}
This corollary represents a significant sharpening of the original theorem for the planar case. While the original result is qualitative (the subgraph is a forest), our refinement is quantitative, leveraging the mandated sparsity of planar braces to impose a hard limit on the number of such nonthin edges.
\end{remark}

\section{Implications of planarity on the upper bound of thin edges in a planar brace}

\Cref{thm:lwr-bnd-thn-edg-brc} provides a powerful lower bound on the number of thin edges in a brace as a function of the proportion of its cubic vertices.
For general braces, the number of cubic vertices $n_3$ can be zero, allowing the lower bound of \emph{thin} edges to approach $|\VV(\GG)|+1$. However, for planar braces, the parameter $n_3$ is not free. The structural constraint from \Cref{thm:no-cubic-vrtx-plnr-brace} has a critical impact on the output of this formula of \Cref{thm:lwr-bnd-thn-edg-brc}. Specifically, if $n_3 = k|\VV(\GG)|$, the number of thin edges is at least $\frac{2-5k}{2}|\VV(\GG)|+1$, provided $k < 0.4$. Our result from \Cref{thm:no-cubic-vrtx-plnr-brace}, has a direct and significant implication for the application of this formula to the class of planar braces.

Our theorem establishes that for any planar brace, $n_3 \ge 8$. This imposes a new, size-dependent lower bound on the parameter $k$:
$$k = \frac{n_3}{n} \ge \frac{8}{n}$$
For general braces, the proportion of cubic vertices $k$ can be arbitrarily close to 0 (for example, in large complete bipartite braces $K_{m,n}$ where all vertices have degree much greater than 3). In such cases, as $k \to 0$, the He and Lu bound approaches $|\VV(\GG)|+1$.

However, for the class of planar braces, $k$ is bounded away from zero. This means that the most generous instances of the He and Lu bound (those for very small $k$) are not realized by planar braces. While \Cref{thm:lwr-bnd-thn-edg-brc} remains mathematically valid for any planar brace, its utility is shaped by this new constraint. For any specific planar brace on $n$ vertices, we know that the number of thin edges is bounded below by the He-Lu formula evaluated at $k=n_3/n$, where $n_3$ is at least 8.

This analysis also opens the door to a more refined perspective. The proof of \Cref{thm:lwr-bnd-thn-edg-brc} in \cite{HE2025153} is a counting argument that uses the result of \Cref{thm:frst-nthin-deg4} (the number of nonthin edges in $G$ is at most $|S_1|-1$) and combines it with a general degree sum calculation. A bespoke bound for planar braces could potentially be tighter by incorporating the powerful planarity constraint $m \le 2n - 4$ directly into this counting argument from the outset.

\begin{proof}[Proof of \Cref{thm:thin_edge_analysis}]
Let $\GG$ be a brace with $n=|\VV(\GG)|$ vertices. The formula $|\EE_T| \ge n - \frac{5}{2}n_3 + 1$ is taken directly from \Cref{thm:lwr-bnd-thn-edg-brc}. The function $f(n_3) = n - \frac{5}{2}n_3 + 1$ is a monotonically decreasing function of $n_3$. Therefore, the lower bound on $|\EE_T|$ is maximized when $n_3$ is minimized.

For the class of planar braces, \Cref{thm:no-cubic-vrtx-plnr-brace} establishes a rigid minimum of $n_3=8$. The number of cubic vertices in a bipartite graph must be even, so the smallest possible values for $n_3$ are 8, 10, 12, and so on.

By substituting the absolute minimum value $n_3=8$ into the function, we find the highest possible value that the lower bound can achieve for any planar brace:
\[ \therefore |\EE_{T}| \le n - \frac{5}{2}(8) + 1 = n - 20 + 1 = n - 19 \]
Since any planar brace must have $n_3 \ge 8$, the value of the lower bound $n - \frac{5}{2}n_3 + 1$ will always be less than or equal to $n-19$. \qed
\end{proof}

\begin{remark}
This finding reveals a fundamental divergence between the general theory of braces and the specific properties of planar braces. While \Cref{thm:lwr-bnd-thn-edg-brc} holds, its application to the planar case is permanently constrained. No planar brace can realize the higher-end lower bounds that are possible in general braces (e.g., for braces with $n_3 < 8$). The planarity condition creates a ``thin edge deficit'' relative to what is structurally possible in the broader, non-planar world, capping the guaranteed number of thin edges at a value significantly lower than the theoretical maximum.
\end{remark}

\section{Applicability of the He-Lu Theorems for the Planar Case}

Having established the non-existence of planar braces without any cubic vertices and derived a fundamental structural property that all planar braces must possess, we can now return to the core of the theorems and underlying lemma of He and Lu \cite{HE2025153}. We analyse their applicability and implications in light of our new findings.

\subsection{Validity of \Cref{thm:frst-nthin-deg4} in Planar Braces}

\Cref{thm:frst-nthin-deg4} from He and Lu states that for any brace $\GG$, the subgraph induced by the nonthin edges whose endpoints are all in $S_1$ (vertices of degree $\ge 4$) is a forest \cite{HE2025153}. The question is whether this theorem remains valid when the additional constraint of planarity is imposed on $\GG$. To answer this, we must examine the proof of the theorem.

The proof of \Cref{thm:frst-nthin-deg4}, as detailed in the paper and its analysis \cite{HE2025153}, rests on two preceding lemmas: Lemma 8 and Lemma 9 from \cite{HE2025153}.
\begin{itemize}
    \item \textbf{Lemma 8} establishes a property for a vertex $u \in S_1$ that is incident to two nonthin edges. It shows that the $S$-cuts associated with these edges, say $\partial(X)$ and $\partial(Y)$, must satisfy $|\overline{X} \cap \overline{Y}| \le 1$. The proof of this lemma relies on applications of \Cref{propo:nonthin-scut} and \Cref{cor:x-intsct-b-sz}. These propositions concern the cardinalities of vertex subsets in different parts of the bipartition and the structure of neighborhood sets related to $S$-cuts. The arguments are entirely combinatorial and set-theoretic, manipulating vertex set sizes based on the abstract definition of a brace.
    \item \textbf{Lemma 9} takes the property proven in Lemma 8 as a hypothesis. It assumes, for the sake of contradiction, that a cycle of nonthin edges exists within $\GG$. The proof then constructs a sequence of ``end sets'' associated with the edges of this cycle and shows that their cardinalities must form an infinite strictly decreasing sequence of positive integers ($|W_1| > |W_2| > \dots > |W_k| > |W_1^1| > \dots$). This is a logical impossibility, thus proving that no such cycle can exist. This argument, too, is purely combinatorial, relying on the properties of $S$-cuts and vertex sets.
\end{itemize}

Crucially, neither of these proofs makes any reference to or relies upon any geometric or topological properties of the brace $\GG$. The arguments are ``agnostic'' to whether the graph can be embedded in the plane, on a torus, or any other surface. The proofs depend only on the abstract combinatorial structure that defines a brace.

Therefore, imposing the additional constraint of planarity on $\GG$ does not invalidate any step in the logical chain of the proofs of Lemma 8, Lemma 9 from \cite{HE2025153}, or \Cref{thm:frst-nthin-deg4} itself. The theorem's conclusion is robust and holds for any subclass of braces, including planar braces.

Thus \Cref{thm:frst-nthin-deg4} from He and Lu holds for planar braces. The subgraph of a planar brace induced by its nonthin edges with endpoints of degree four or more is a forest. This conclusion highlights a key feature of many powerful results in structural graph theory: proofs that rely on fundamental combinatorial properties often have a broad and robust applicability across many different graph classes, irrespective of additional geometric constraints.

\section{Conclusion and Future Directions}
We have systematically investigated the structure of planar braces, beginning with a query about the applicability of recent theorems to the variants of planar braces. Our analysis has yielded several key contributions to the understanding of this graph class.

We provided a formal proof that the class of planar braces without any cubic vertices is empty. This was achieved by demonstrating an irreconcilable conflict between the minimum degree requirement of a brace having no cubic vertices ($\delta(G) \ge 4$) and the maximum average degree allowed for a planar bipartite graph (average degree $< 4$).

We determined the minimum number of cubic vertices in any planar brace. We proved a new theorem establishing this lower bound to be eight. We demonstrated that the bound $n_3 \ge 8$ is sharp by identifying the cube graph, $Q_3$, as a planar brace with exactly eight cubic vertices. This establishes $Q_3$ as a fundamental extremal graph in this context.

We have given quantitatively tight upper bounds on the number of nonthin edges and on the number of thin edges in a planar brace. 

We confirmed that Theorem 2 from He and Lu \cite{HE2025153}, which states that the induced subgraph of nonthin edges in $\GG$ is a forest, remains valid for planar braces. This is due to the purely combinatorial nature of its proof, which is unaffected by topological constraints.

We analysed the implications of our $n_3 \ge 8$ result on Theorem 3 from He and Lu \cite{HE2025153}. We showed that for planar braces, the proportion of cubic vertices $k$ is bounded away from zero by $k \ge 8/n$, which constrains the range of the thin-edge lower bound provided by their formula.

Our findings open several promising avenues for future research at the intersection of matching theory, topological graph theory, and combinatorics.

Our work shows that the cube graph $Q_3$ is an example of a planar brace that meets the bound $n_3=8$ with equality. A natural next step is to ask for a full characterization of this extremal family. Are all planar braces with exactly eight cubic vertices related to $Q_3$ through a set of graph operations? What other structural properties do they share?

The strict edge bound $m \le 2n - 4$ is unique to the plane. For a graph embedded on a torus (genus 1), the corresponding bound for a bipartite graph is $m \le 2n$. If we were to repeat the proof of \Cref{thm:no-cubic-vrtx-plnr-brace} for a toroidal brace, the final inequality would become $4n - n_3 \le 2m \le 4n$, which simplifies to $n_3 \ge 0$. This is a trivial bound. This dramatic difference suggests that the structural mandates for braces change significantly with the genus of the embedding surface. A systematic study of the minimum number of cubic vertices for braces on the torus, projective plane, and other surfaces could reveal a deep connection between topology and matching structure.

As noted by He and Lu \cite{HE2025153}, efficiently finding a thin edge in a general brace remains an open problem. Planar braces, however, possess additional structure that might be algorithmically exploitable. Their sparsity (guaranteed by $m \le 2n - 4$) and the mandated presence of at least eight cubic vertices could potentially be leveraged to design a more efficient algorithm for finding thin edges specifically within this subclass of braces.

Both braces and planar graphs play significant roles in the mathematical theory of structural rigidity \cite{graver1993combinatorial}. Planar graphs are central to Laman's theorem characterizing minimally rigid structures in the plane, and braces exhibit strong connectivity properties related to global rigidity. An investigation into the specific rigidity properties of planar braces and how the necessary presence of at least eight cubic vertices affects their stability and degrees of freedom could yield novel insights connecting matching theory, rigidity, and planar topology.

%

%
%
%
\bibliographystyle{splncs04}
\bibliography{mybibliography}
\end{document}